\theoremstyle{plain}
\newtheorem{thm}{Theorem}
\newtheorem{cor}[thm]{Corollary}
\newtheorem{prop}[thm]{Proposition}
\newcommand\floor[1]{\lfloor#1\rfloor}
\newcommand\ceil[1]{\lceil#1\rceil}
\title{Forbidden arithmetic progressions in permutations of subsets of the integers}
\date{}
\author{Jesse Geneson}
\begin{document}
\maketitle

\begin{abstract}
Permutations of the positive integers avoiding arithmetic progressions of length $5$ were constructed in (Davis et al, 1977), implying the existence of permutations of the integers avoiding arithmetic progressions of length $7$. We construct a permutation of the integers avoiding arithmetic progressions of length $6$. We also prove a lower bound of $\frac{1}{2}$ on the lower density of subsets of positive integers that can be permuted to avoid arithmetic progressions of length $4$, sharpening the lower bound of $\frac{1}{3}$ from (LeSaulnier and Vijay, 2011). In addition, we generalize several results about forbidden arithmetic progressions to construct permutations avoiding generalized arithmetic progressions.
\end{abstract}

\section{Introduction}
Davis et al \cite{dga} proved that any permutation of the positive integers contains an arithmetic progression of length $3$, and they also constructed permutations of the positive integers avoiding arithmetic progressions of length $5$. As a result, they noted that there exist permutations of the integers that avoid arithmetic progressions of length $7$.

The results of Davis et al leave open the problem of whether there exist permutations of the positive integers avoiding arithmetic progressions of length $4$, and whether there exist permutations of the integers avoiding arithmetic progressions of length $4$, $5$, or $6$. These open problems are also mentioned in \cite{ega}.

In \cite{ega}, Erdos and Graham also asked whether the positive integers can be partitioned into two sets, both of which can be permuted to avoid arithmetic progressions of length $3$. This question is still unsolved, but a possible strategy to solve it was suggested in \cite{dga, ab}. 

Define $\alpha_{\mathbb{Z}^{+}}(k)$ to be the supremum of $lim sup_{n \rightarrow \infty} \frac{S \cap [1,n]}{n}$ over all sets $S$ of positive integers that can be permuted to avoid arithmetic progressions of length $k$, and similarly define $\beta_{\mathbb{Z}^{+}}(k)$ to be the supremum of $lim inf_{n \rightarrow \infty} \frac{S \cap [1,n]}{n}$ over all sets $S$ of positive integers that can be permuted to avoid arithmetic progressions of length $k$.

In \cite{ab}, LeSaulnier and Vijay noted that the answer to Erdos and Graham's question would be no if $\alpha_{\mathbb{Z}^{+}}(3)+ \beta_{\mathbb{Z}^{+}}(3) < 1$. They showed that $\alpha_{\mathbb{Z}^{+}}(3) \geq \frac{1}{2}$ and  $\beta_{\mathbb{Z}^{+}}(3) \geq \frac{1}{4}$, conjecturing that these lower bounds were tight and that there was no way to partition the positive integers in the way that Erdos and Graham described. They also proved that $\alpha_{\mathbb{Z}^{+}}(4) = 1$ and $\beta_{\mathbb{Z}^{+}}(4) \geq \frac{1}{3}$.

Davis et al \cite{dga} proved that there are permutations of the positive integers that avoid arithmetic progressions of length $5$, so $\alpha_{\mathbb{Z}^{+}}(k) = \beta_{\mathbb{Z}^{+}}(k) = 1$ for all $k \geq 5$. Thus the only open problem for $\alpha_{\mathbb{Z}^{+}}$ is evaluating $\alpha_{\mathbb{Z}^{+}}(3)$, and the only open problems for $\beta_{\mathbb{Z}^{+}}$ are evaluating $\beta_{\mathbb{Z}^{+}}(3)$ and $\beta_{\mathbb{Z}^{+}}(4)$.

In Section \ref{density}, we construct a permutation of the integers that avoids arithmetic progressions of length $6$. We also prove that $\beta_{\mathbb{Z}^{+}}(4) \geq \frac{1}{2}$, sharpening the bound from \cite{ab}. In addition, we prove density bounds for sets of integers rather than just positive integers. For these results, we define analogues of $\alpha_{\mathbb{Z}^{+}}(k)$ and $\beta_{\mathbb{Z}^{+}}(k)$ for the integers. 

The upper density function $\alpha_{\mathbb{Z}}(k)$ is the supremum of $lim sup_{n \rightarrow \infty} \frac{S \cap [-n,n]}{n}$ over all sets $S$ of integers which can be permuted to avoid arithmetic progressions of length $k$, and the lower density function $\beta_{\mathbb{Z}}(k)$ is the supremum of $lim inf_{n \rightarrow \infty} \frac{S \cap [-n,n]}{n}$ over all sets $S$ of positive integers which can be permuted to avoid arithmetic progressions of length $k$. 

In the other sections of the paper, we prove results about generalized arithmetic progressions, where we use the term $(r_{1}, \dots, r_{k-1})$ $k$-progression to refer to a sequence of $k$ numbers of the form $a, a+r_{1}d, \dots, a+\sum_{i = 1}^{k-1} r_{i}d$. We show that every permutation of the positive integers contains an $(r, s)$ $3$-progression in Section \ref{prog3}, and we find lower bounds on the number of permutations of $1, \dots, n$ that avoid certain generalized arithmetic progressions in Section \ref{finp}. Density bounds for permutations avoiding generalized arithmetic progressions are in Section \ref{gap}.

\section{Density bounds}\label{density}

First we construct a permutation of the integers avoiding arithmetic progressions of length $6$. In Section \ref{gap}, we show how a similar construction can be used to avoid certain generalized arithmetic progressions of length $6$.

\begin{prop}\label{mainres}
There exist permutations of the integers avoiding arithmetic progressions of length $6$.
\end{prop}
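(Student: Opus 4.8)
The plan is to build an explicit permutation of $\mathbb{Z}$ and verify directly that no six of its terms form an arithmetic progression. The natural starting point is the known construction of Davis et al.\ for a permutation of $\mathbb{Z}^{+}$ avoiding arithmetic progressions of length $5$; call it $\pi^{+}$. Since an arithmetic progression that avoids $5$-term structure on each half may still create a $6$-term progression when the two halves are interleaved across $0$, the first step is to understand how Davis et al.'s permutation distributes short progressions. The idea I would pursue is to place the nonnegative integers and the negative integers into two interleaved ``tracks'' of the permutation of $\mathbb{Z}$, using (a shift of) $\pi^{+}$ to order each track. Concretely, I would define a permutation $\sigma$ of $\mathbb{Z}$ so that, roughly, $\sigma$ alternates between outputting a nonnegative integer (in the order dictated by $\pi^{+}$) and a negative integer (in the order dictated by another copy of $\pi^{+}$ applied to $\{-1,-2,\dots\}$).

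The key steps, in order, would be: (1) recall the exact form of the Davis et al.\ permutation and the property that it contains no $5$-term arithmetic progression among the positive integers; (2) define the candidate permutation $\sigma$ of all of $\mathbb{Z}$ by interleaving two copies of this ordering, one on $\{0,1,2,\dots\}$ and one on the negatives, being careful about the position parity so that each copy occupies a fixed residue class of positions; (3) suppose for contradiction that $\sigma$ contains a $6$-term arithmetic progression $a, a+d, \dots, a+5d$; (4) analyze how many of these six terms can be nonnegative and how many negative. Since an arithmetic progression is itself an arithmetic progression when restricted to the nonnegative part or the negative part, and each restriction lies inside a single track ordered by a length-$5$-avoiding permutation, the terms on either side of $0$ can contribute at most a $4$-term arithmetic progression within that track. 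The crux is then a counting argument: a $6$-term progression split as (nonnegatives)$+$(negatives) would force at least one side to contain a long sub-progression compatible with the positions assigned by $\sigma$, contradicting the length-$5$ avoidance on that track.

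The main obstacle I anticipate is step (4): controlling progressions that genuinely straddle $0$, because the interleaving means the \emph{positions} of the terms in $\sigma$ do not respect the arithmetic structure of the \emph{values}, and an arithmetic progression in values can pick up terms from both tracks in an irregular pattern. In particular I must rule out a progression whose values alternate or split unevenly between positives and negatives; the danger is a configuration like three positive and three negative values in arithmetic progression, where neither track alone sees a forbidden length, yet the combined sequence is a valid $6$-AP in $\mathbb{Z}$. To handle this I would exploit the fact that a $6$-term arithmetic progression in $\mathbb{Z}$ with common difference $d$ spans an interval of length $5|d|$, and count how the terms distribute relative to $0$: for a genuine crossing, at most one or two terms lie on the ``minority'' side, so the majority side still contains a length-$5$ sub-progression, which the track ordering forbids. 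Making this boundary analysis airtight — checking the small number of ways the six terms can be apportioned between the two tracks and showing each forces a forbidden pattern on one side — is where the real work lies.

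Finally, I would confirm that $\sigma$ is genuinely a bijection of $\mathbb{Z}$ (every integer appears exactly once, at a well-defined position) and that the construction requires no appeal to the length-$7$ result, so that the bound of $6$ is obtained directly. Once the case analysis in step (4) is complete, the contradiction in each case establishes that $\sigma$ avoids arithmetic progressions of length $6$, proving the proposition.
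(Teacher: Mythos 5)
Your construction has a fatal gap exactly at the point you flag as ``the real work'': the counting claim in step (4) is false. A $6$-term arithmetic progression in $\mathbb{Z}$ can straddle $0$ with a $3{+}3$ or $4{+}2$ split --- for instance $-5,-3,-1,1,3,5$ --- and in such a split \emph{neither} side contains a $5$-term sub-progression, so the length-$5$ avoidance of each track gives no contradiction. Your assertion that ``for a genuine crossing, at most one or two terms lie on the minority side'' contradicts the very danger you correctly identified one sentence earlier; there is no way to rule out balanced splits, because the restriction of a monotone AP to either side of $0$ only has to be short, not long. Worse, whether such a straddling AP actually appears in your interleaved $\sigma$ depends on cross-track position comparisons (e.g.\ whether $-1$ occurs before $1$), which the interleaving does not control at all: each track sees only an innocuous $3$-term AP, and nothing prevents the six positions from lining up. The same accounting shows the interleaving idea cannot be pushed below length $9$ using only $5$-avoidance on the tracks (each side contributes at most $4$ terms), so no repair of the case analysis alone will save the bound of $6$.

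The paper avoids this problem with a different decomposition: it partitions $\mathbb{Z}\setminus\{0\}$ into \emph{symmetric} blocks $A_i \cup B_i$ with $A_i = [10^i, 10^{i+1})$ and $B_i = -A_i$, rearranges each block to avoid $3$-term APs, and concatenates the blocks in order of increasing $i$. The symmetry is the key point you are missing: since $n$ and $-n$ always lie in the same block, a progression that straddles $0$ with small common difference lies entirely inside one block and is killed by the $3$-term avoidance there; and if the difference is large enough to escape a block, the exponential growth of block sizes forces three consecutive terms of the progression into a single later block, again a contradiction. In short, the paper needs only $3$-term avoidance on finite symmetric pieces plus a growth argument, whereas your approach asks the two infinite tracks to do work (controlling cross-track order) that they structurally cannot do.
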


\begin{proof}
Define a sequence of intervals $A_{0}, A_{1}, \dots$ so that $A_{i} = [10^{i},10^{i+1})$ and $B_{0}, B_{1}, \dots$ so that $B_{i}$ contains the additive inverses of the elements in $A_{i}$. Let $X_{i}^{*}$ be obtained by rearranging $A_{i} \cup B_{i}$ so that it contains no arithmetic progression of length $3$. Finally, consider the permutation $0 X_{0}^{*} X_{1}^{*} \dots$.

Suppose for contradiction that the permutation contained an arithmetic progression of length $6$. Then the second element of the progression is in some block of the form $X_{i}^{*}$. If the third element of the progression is in the same block $X_{i}^{*}$ as the second element, then the fourth element must be in a different block. Since the absolute value of the difference between the first two elements in the progression is at most $2 \times 10^{i+1}$, we can conclude that the fourth, fifth, and sixth elements must all be in $X_{i+1}^{*}$, which is a contradiction. If the third element of the progression is in a different block from the second element, then the third, fourth, and fifth elements of the progression are forced to be in the same block, a contradiction.
\end{proof}

\begin{cor}
$\alpha_{\mathbb{Z}}(k) = \beta_{\mathbb{Z}}(k) = 1$ for all $k \geq 6$
\end{cor}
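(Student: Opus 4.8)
The plan is to deduce the corollary directly from Proposition~\ref{mainres}, using two observations. First I would note that a permutation avoiding arithmetic progressions of length $6$ automatically avoids arithmetic progressions of every length $k \geq 6$: any arithmetic progression of length $k$ appearing in the permutation contains, as its first six terms read in the same order, an arithmetic progression of length $6$. Hence the single permutation of $\mathbb{Z}$ produced in Proposition~\ref{mainres} simultaneously avoids arithmetic progressions of length $k$ for all $k \geq 6$, so no new construction is needed for each individual $k$.

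Next I would take $S = \mathbb{Z}$ itself as the witnessing set. Since the construction in Proposition~\ref{mainres} permutes all of $\mathbb{Z}$, the full set $S = \mathbb{Z}$ can be permuted to avoid arithmetic progressions of length $k$ for every $k \geq 6$. For this set we have $|S \cap [-n,n]| = 2n+1$, so both $\limsup_{n \to \infty}$ and $\liminf_{n \to \infty}$ of its density attain the maximal possible value $1$. This gives the lower bounds $\alpha_{\mathbb{Z}}(k) \geq 1$ and $\beta_{\mathbb{Z}}(k) \geq 1$ for all $k \geq 6$.

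Finally, the matching upper bound is immediate from the definitions: for any set $S$ of integers, $|S \cap [-n,n]|$ cannot exceed the total number of integers in $[-n,n]$, so the density is bounded above by $1$, and therefore the suprema defining $\alpha_{\mathbb{Z}}(k)$ and $\beta_{\mathbb{Z}}(k)$ are at most $1$. Combining the two bounds yields $\alpha_{\mathbb{Z}}(k) = \beta_{\mathbb{Z}}(k) = 1$. I do not expect any genuine obstacle here, since the statement is a direct corollary; the only points requiring care are the first observation, that one construction handles all $k \geq 6$ at once, and the routine verification that the full set $\mathbb{Z}$ realizes density exactly $1$ under the chosen normalization.
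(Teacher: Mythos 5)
Your proposal is correct and is exactly the argument the paper leaves implicit: the corollary is stated without proof because the construction of Proposition~\ref{mainres} permutes all of $\mathbb{Z}$ (which has density $1$), and any permutation avoiding arithmetic progressions of length $6$ avoids them for every length $k \geq 6$, since the first six terms of a longer progression form a length-$6$ progression in the same order. Your side remark about normalization is well taken --- the paper's displayed definition divides by $n$, under which $\mathbb{Z}$ would have density $2$, so the intended normalization is by the size of $[-n,n]$, exactly as you assume.
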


The bound $\beta_{\mathbb{Z}^{+}}(4) \geq \frac{1}{3}$ was proved in \cite{ab}. We sharpen this bound to $\frac{1}{2}$ below. 

\begin{prop}
 $\beta_{\mathbb{Z}^{+}}(4) \geq \frac{1}{2}$
\end{prop}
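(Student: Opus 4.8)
The plan is to adapt the block construction of Proposition~\ref{mainres} to the positive integers, trading the shorter forbidden length for discarding roughly half of the integers. Concretely, I would fix a base $b$ (taking $b$ reasonably large, $b\ge 3$, will matter) and choose a geometrically growing family of disjoint intervals $A_0,A_1,\dots\subseteq\mathbb{Z}^{+}$ separated by gap intervals, arranged so that each gap has the same length as the block immediately preceding it. Writing $L_i$ for the length of $A_i$, $G_i$ for the length of the gap after $A_i$, and $s_{i+1}$ for the start of $A_{i+1}$, the requirement $G_i=L_i$ forces the fraction of retained integers below $s_{i+1}$ to tend to $\tfrac12$; the worst points for the $\liminf$ are the right endpoints of the gaps, where the ratio approaches $\tfrac12$ from below, so $S=\bigcup_i A_i$ has lower density exactly $\tfrac12$. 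I would then define the permutation by listing the blocks and, inside each block, using an ordering of $A_i$ that contains no three-term progression as a subsequence; such orderings of finite sets are standard and underlie the constructions in \cite{dga}.

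The verification would be a case analysis on how the four terms $t_1<t_2<t_3<t_4$ of a putative progression distribute among the blocks. Since the blocks are value-intervals listed in increasing order, the terms occupy a non-decreasing sequence of blocks and the terms in any single block form a contiguous run. A block holding three terms would then contain a three-term progression, which the within-block ordering forbids, so each block holds at most two terms. Using $G_i\ge L_i$ I would rule out every configuration in which two \emph{consecutive} terms $t_m,t_{m+1}$ share a block while a later term lies in a higher block: two terms in one block force $d<L_i$, whereas crossing the following gap forces $d>G_i\ge L_i$. Finally, because $t_{m+1}/t_m=1+d/t_m<2$, the terms $t_2,t_3,t_4$ meet at most two blocks once $b$ is large enough, which rules out all four terms lying in distinct blocks. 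After these reductions the only surviving shape is a progression whose top two terms $t_3,t_4$ occupy one block $A_{j+1}$, with $t_2\in A_j$ and $t_1$ somewhere below.

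This remaining configuration is where I expect the real difficulty to lie. The within-block ordering is powerless against it, since it governs only triples and cannot prescribe the relative order of the single pair $t_3,t_4$, and an increasing listing of the blocks lets such a progression appear with increasing positions. My intended remedy is to list the blocks not in increasing order but according to a fixed non-monotone reindexing designed so that the three blocks hosting $t_1,t_2,t_3$ never appear in increasing positional order, forcing a descent. The crux—and the step I expect to resist a clean treatment—is that $t_1$ may sit many blocks below $t_2$, so a simple adjacent-transposition reordering of the blocks fails to create the needed descent for every such progression at once; arranging the block order (perhaps together with a further thinning that still preserves lower density $\tfrac12$) so as to defeat all of these progressions simultaneously is the part that will require the most care.
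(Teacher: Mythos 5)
Your reduction is sound up to exactly the point you flag, and the configuration you isolate --- $t_1\in A_i$, $t_2\in A_j$, $t_3,t_4\in A_{j+1}$ with $i<j$ --- is not merely an unhandled case but a genuine counterexample to your construction: with $G_j=L_j$ there is ample room to pick $t_3,t_4\in A_{j+1}$ and a difference $d$ so that $t_2=t_3-d$ lands in $A_j$ and $t_1=t_2-d$ lands in some lower block, and in an increasing listing of blocks this progression appears as a subsequence no matter how each block is internally arranged. Your proposed remedy (a non-monotone reindexing of the blocks) points in the wrong direction for two reasons. First, for fixed $j$ the term $t_1$ can be made to land in essentially any block $A_i$ with $i<j$, so you would need the blocks $A_i,A_j,A_{j+1}$ to avoid increasing positional order for all $i<j$ simultaneously. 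Second, the monotone listing is precisely what kills every \emph{decreasing} progression across blocks for free (a later-positioned term has larger value, so a decreasing progression must sit inside one block); any non-monotone reordering re-opens a family of configurations that your case analysis never addresses.

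The missing idea is quantitative rather than combinatorial: the gap after a block must dominate not the length of that block but the \emph{entire range of values below it}. The paper takes $I_n=[\ceil{a^n},\floor{ba^n}]$ with $a=2b$, so the gap following $I_n$ has length about $a^{n+1}-ba^n=ba^n$, i.e.\ at least the largest value preceding it. Then for \emph{any} two consecutive terms $t_m<t_{m+1}$ of a progression lying below this gap --- whether or not they share a block --- one has $d=t_{m+1}-t_m<ba^n\le a^{n+1}-ba^n$, so $t_{m+2}$ cannot lie beyond the gap. Applying this twice, with $I_n$ the block containing $t_2$: since $t_1,t_2\le \floor{ba^n}$, the term $t_3$ is forced into $I_n$, and then $t_2,t_3\in I_n$ forces $t_4\in I_n$, so $t_2,t_3,t_4$ is a $3$-term progression inside a single block, a contradiction; your problematic shape never arises, and no case analysis on block patterns is needed. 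The density lost to these huge gaps is recovered not by your condition $G_i=L_i$ but by making each block occupy almost all of the values beneath its gap: $I_n$ covers a $(1-\frac{1}{b})$-fraction of $[1,ba^n]$, and $\bigcup_n I_n$ has lower density $\frac{b-1}{2b-1}$, which tends to $\frac{1}{2}$ as $b\to\infty$. Since $\beta_{\mathbb{Z}^{+}}(4)$ is a supremum over sets $S$, this one-parameter family of constructions suffices; a single set of lower density exactly $\frac{1}{2}$ is not required.
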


\begin{proof}
Define intervals $I_{1}, I_{2}, \dots$ of the form $I_{n} = [\ceil{a^{n}},\floor{ba^{n}}]$ for parameters $a, b$ with $1 < b < a$ to be chosen later, and let $I_{n}^{*}$ be obtained by rearranging $I_{n}$ so that it contains no arithmetic progression of length $3$. The set of positive integers consisting of the elements of the union of the $I_{n}$'s has lower density $\frac{b-1}{a-1}$ and upper density $(1-\frac{1}{b})(1+\frac{1}{a-1})$. 

For the permutation $I_{1}^{*} I_{2}^{*} \dots$ to avoid arithmetic progressions of length $4$, it suffices for $\frac{a}{b} - (\frac{a}{b}-1)2 \leq 0$ since any arithmetic progression of length $3$ would have to be contained in multiple $I_{n}^{*}$'s. This is equivalent to $\frac{a}{b} \geq 2$, so if we let $a = 2b$, then the lower density is $\frac{b-1}{a-1} = \frac{b-1}{2b-1}$. Since $b$ can be arbitrarily large, it follows that $\beta_{\mathbb{Z}^{+}}(4) \geq \frac{1}{2}$.
\end{proof}

In \cite{ab}, the authors proved that $\alpha_{\mathbb{Z}^{+}}(3) \geq \frac{1}{2}$ and $\beta_{\mathbb{Z}^{+}}(3) \geq \frac{1}{4}$, conjecturing that these bounds were tight. We find the same lower bound on upper density for permutations of the full set of integers avoiding arithmetic progressions of length $3$, but a lesser lower bound on lower density.

\begin{prop}
$\alpha_{\mathbb{Z}}(3) \geq \frac{1}{2}$ and $\beta_{\mathbb{Z}}(3) \geq \frac{1}{6}$
\end{prop}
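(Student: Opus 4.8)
The plan is to exhibit a single symmetric set $S=-S$ of integers, built from geometric magnitude-blocks, that simultaneously witnesses both bounds, in direct analogy with the previous proposition. Fix parameters $1<b<a$ and for $n\ge 1$ let the $n$-th block be $B_{n}=\{x\in\mathbb{Z}:\ceil{a^{n}}\le |x|\le\floor{ba^{n}}\}$, the union of a positive interval with its reflection. Exactly as the paper rearranges an interval so that it contains no arithmetic progression of length $3$, I would reorder each finite set $B_{n}$ internally into a sequence $B_{n}^{*}$ with no $3$-term progression; such an ordering always exists (recursively list the even elements, then the odd ones, because in any $3$-term progression with odd common difference the middle term has parity opposite to the two extreme terms and is therefore forced entirely to one side, while an even common difference reduces to a smaller instance). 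The permutation of $S$ is the concatenation $B_{1}^{*}B_{2}^{*}\cdots$, so positions increase with the block index, and since $|x|\in[a^{n},ba^{n}]$ in block $n$, an element's block index is essentially $\log_{a}|x|$.

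Next I would rule out $3$-term progressions spanning several blocks. Write a putative progression as $u<v<w$ with middle value $v=(u+w)/2$; since within a block all positions precede those of every later block, it appears as a subsequence only if the block of $v$ lies weakly between those of $u$ and $w$, and progressions inside one block are already killed by the internal ordering. For same-sign progressions the situation is identical to the positive-integer case: the only dangerous configurations place the small term in an early block and the two large terms close together in a later one, and a short computation (as in the previous proposition) eliminates them once $a/b\ge 2$ together with a mild upper bound on $b$. The genuinely new case is a progression straddling $0$, say $u<0<v<w$, where $|u|=w-2v$. If $w>3v$ then $|u|>v$, so the average $v$ lies in an \emph{earlier} block than both extreme terms and occupies the earliest rather than the middle position, whence no progression appears. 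In the complementary range $2v<w<3v$ the average is comparable to the extreme terms, and here I would instead show that the third term $w=2v+|u|$ is forced into the gap between consecutive blocks: $w$ can be as large as roughly $3ba^{n}$ while the next block begins at $a^{n+1}$, so demanding $3b<a$ places $w$ strictly between blocks and hence $w\notin S$. Thus the binding requirement is $a/b\ge 3$, and the reflected pattern $u<v<0<w$ follows by symmetry.

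Finally I would compute the densities. Because $S$ is symmetric, $|S\cap[-N,N]|$ is twice the size of its positive part, and, with the normalization assigning the full set of integers density $1$, this factor of two cancels the factor of two in the length of $[-N,N]$; consequently the lower and upper densities are given by the same formulas $\frac{b-1}{a-1}$ and $(1-\frac1b)(1+\frac1{a-1})$ as in the previous proposition. Setting $a=3b$ (the threshold from the straddling analysis) makes the lower density $\frac{b-1}{3b-1}$, which is increasing in $b$; letting $b\uparrow\frac53$ drives the lower density to $\frac16$ and the upper density $\frac{3(b-1)}{3b-1}$ to $\frac12$. Taking the supremum over admissible parameters then yields $\beta_{\mathbb{Z}}(3)\ge\frac16$ and $\alpha_{\mathbb{Z}}(3)\ge\frac12$.

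The main obstacle is precisely the straddling-progression analysis. Unlike the one-sided setting, a progression through $0$ pairs a large negative term with a large positive term, and pinning down exactly when their average re-enters $S$ is what raises the required ratio from $2$ to $3$. This is the phenomenon responsible for the lower-density bound dropping from the value $\frac14$ available for $\mathbb{Z}^{+}$ to only $\frac16$ for $\mathbb{Z}$, so getting the constant right in this step is the crux of the argument.
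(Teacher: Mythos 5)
Your construction is the same as the paper's: symmetric blocks $\{x : \ceil{a^{n}} \le |x| \le \floor{ba^{n}}\}$, each internally rearranged to be free of $3$-term progressions and then concatenated, with the same optimal parameters (your $a=3b$, $b\uparrow\frac{5}{3}$ is exactly the paper's choice $A_{i}=[5^{i},\floor{\frac{5}{3}5^{i}}]$ together with its reflection, i.e.\ $a=5$, $b=\frac{5}{3}$) and the same density formulas and limiting values. The only real difference is how the cross-block exclusion is organized. The paper argues by position: the second and third terms of a progression (in order of appearance) can be neither in different blocks (the total spread $2\floor{ba^{i}}$ of blocks up to $i$ bounds the common difference and is smaller than the gap $a^{i+1}-\floor{ba^{i}}$ --- this is where the ratio $3$ rather than $2$ enters, with no need to single out sign patterns) nor in the same block (then the difference is at least the gap, pushing the third term past the top of that block). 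You instead split by sign pattern; your same-sign analysis and your straddling sub-case $2v<w<3v$, where you show $w$ lands in the gap $(\floor{ba^{n}},a^{n+1})$ and hence $w\notin S$, are correct (granted $b<2$, which holds).

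However, your other straddling sub-case contains a genuine flaw. From $w>3v$, i.e.\ $|u|>v$, you conclude that $v$ lies in a \emph{strictly} earlier block than both extreme terms, so it cannot occupy the middle position. But $|u|>v$ only gives a weak inequality of block indices: $u$ and $v$ may share a block. Concretely, with $a=5$, $b=\frac{5}{3}$, take $v=5$ and $u=-6$, both in the first block $\{x : 5\le|x|\le 8\}$; nothing prevents the internal rearrangement from listing $-6$ before $5$, and then, if $w=2v-u=16$ belonged to $S$, the progression $(-6,5,16)$ would appear in order. That case is saved only because $w\notin S$, not by block positions. Fortunately your own second argument closes the hole under the same constraint: when $u$ and $v$ share block $n$, one has $w=2v+|u|\ge 3\ceil{a^{n}}>\floor{ba^{n}}$ and $w\le 3\floor{ba^{n}}<a^{n+1}$ (using $3b\le a$, with the floors supplying strictness exactly as in the paper, since $\frac{5}{3}5^{n}$ is never an integer), so again $w$ falls in the inter-block gap. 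With that repair --- and noting that the ``mild upper bound'' $b<\frac{2a}{a+1}$ from the same-sign case is what actually caps $b$ at $\frac{5}{3}$ once $a=3b$ --- your proof is correct and yields the paper's bounds.
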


\begin{proof}
Define a sequence of intervals $A_{1}, A_{2}, \dots$ so that $A_{i} = [5^{i}, \floor{\frac{5}{3} 5^{i}}]$ and $B_{1}, B_{2}, \dots$ so that $B_{i}$ contains the additive inverses of the elements in $A_{i}$. Let $X_{i}^{*}$ be obtained by rearranging $A_{i} \cup B_{i}$ so that it contains no arithmetic progression of length $3$. Finally, consider the sequence $X_{1}^{*}X_{2}^{*}\dots$.

First, note that the sequence has upper density $\frac{1}{2}$ and lower density $\frac{1}{6}$. Moreover, note that $\floor{\frac{5}{3}5^{i}}-(-\floor{\frac{5}{3}5^{i}})< 5^{i+1}-\floor{\frac{5}{3}5^{i}}$, so the second and third terms of a $3$-term arithmetic progression cannot be in different blocks. Also, $(5^{i+1}-\floor{\frac{5}{3}5^{i}})+5^{i+1} > \floor{\frac{5}{3}5^{i+1}}$, so the second and third terms of a $3$-term arithmetic progression cannot be in the same block. Thus there is no arithmetic progression of length $3$.
\end{proof}

\section{$3$-progression containment in permutations of the positive integers}\label{prog3}

Both results below use variations of the proof method introduced in \cite{dga} and also used in \cite{ab}.

\begin{prop}
For each integer $k > 1$, every permutation of the positive integers contains an arithmetic progression of length $3$ with difference not divisible by $k$.
\end{prop}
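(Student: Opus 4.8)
The plan is to pass to the inverse permutation and restate containment as a monotonicity condition. Write $f$ for the permutation as a map from positions to values, and $g=f^{-1}$ for the map sending each value to its position. A progression $v,v+d,v+2d$ is contained in $f$ precisely when $g(v+d)$ lies strictly between $g(v)$ and $g(v+2d)$, i.e.\ when the triple $g(v),g(v+d),g(v+2d)$ is monotone. So it suffices to show that no permutation can make every triple $g(v),g(v+d),g(v+2d)$ with $k\nmid d$ non-monotone (a strict peak or valley at the middle term).

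The core step, following the anchoring method of \cite{dga}, is to exploit the value $a=f(1)$ sitting at the smallest position. Since $g(a)=1$ is the global minimum of $g$, for any $d\ge 1$ the progression $a,a+d,a+2d$ is contained exactly when $g(a+d)<g(a+2d)$. Assuming for contradiction that no progression with $k\nmid d$ is contained, this forces $g(a+d)>g(a+2d)$ for every $d$ with $k\nmid d$. I would then iterate the doubling step $d\mapsto 2d$ to build an infinite strictly decreasing chain $g(a+d_0)>g(a+2d_0)>g(a+4d_0)>\cdots$ of positive integers, which is impossible. For the chain to be legitimate every difference $2^{i}d_0$ must avoid $k\mathbb{Z}$; writing $k=2^{s}t$ with $t$ odd, one checks that $2^{i}d_0\in k\mathbb{Z}$ forces $t\mid d_0$, so choosing $d_0$ with $t\nmid d_0$ (possible as soon as $t>1$) keeps the whole chain valid and yields the contradiction. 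This settles every $k$ that is not a power of $2$.

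The main obstacle is the remaining case $k=2^{s}$, where $t=1$ and doubling inexorably raises the $2$-adic valuation of the difference until it lands in $k\mathbb{Z}$; from the single anchor at position $1$ one recovers only the finite, non-contradictory chain $g(a+1)>g(a+2)>\cdots>g(a+2^{s})$, and when $s=1$ the surviving relations (odd offset $\mapsto$ even offset) do not chain at all. Here it is enough to force a contained progression of \emph{odd} common difference, since an odd difference is never a multiple of $2^{s}$. The plan is to supplement the global minimum with the successive values $f(1),f(2),f(3),\dots$ occupying the smallest positions $1,2,3,\dots$: away from the finitely many smaller-positioned values, each $f(j)$ serves as a local minimum and produces its own family of inequalities $g(f(j)+d)>g(f(j)+2d)$. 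I expect the hard part to be splicing these terminating doubling chains, anchored at different $f(j)$, into a single infinite descent whose rungs all have difference outside $k\mathbb{Z}$, so that each exhausted chain hands off to a new anchor of the correct parity; organizing this bookkeeping is the crux of the argument.
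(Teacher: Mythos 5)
Your reduction and your descent argument for $k$ not a power of $2$ are correct: anchoring at $a=f(1)$, the avoidance hypothesis forces $g(a+d)>g(a+2d)$ for every $d$ with $k\nmid d$, and choosing $d_0$ coprime to the odd part $t>1$ of $k$ keeps every difference $2^{i}d_0$ outside $k\mathbb{Z}$, so the doubling chain gives an impossible infinite descent. This is actually more elementary than the paper's treatment of odd $k$, which reduces to prime $k$ and relies on a computer-checked fact about permutations of $\{1,\dots,11\}$ beginning $2,1$.

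However, the case $k=2^{s}$ is a genuine gap, and you say so yourself: what you offer there is a plan (``splicing these terminating doubling chains \dots\ organizing this bookkeeping is the crux''), not a proof. This case is exactly the hard one. It is equivalent to showing that every permutation of the positive integers contains a $3$-term progression with \emph{odd} common difference, which is precisely the $k=2$ theorem of LeSaulnier and Vijay \cite{ab} that the paper invokes; their proof needs both an anchoring argument and the finite $11$-element verification, so the difficulty is real and not mere bookkeeping. Your multi-anchor scheme faces a concrete obstruction: each anchor $f(j)$ only yields the inequality $g(f(j)+d)>g(f(j)+2d)$ when $f(j)+d$ and $f(j)+2d$ avoid the earlier values $f(1),\dots,f(j-1)$, and every such inequality dead-ends after one step because the difference becomes even; to hand off, you would need $f(j)+2d$ to sit at an odd offset from some other usable anchor, and nothing in your setup guarantees this, let alone infinitely often along a single descending chain. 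As written, the proposal therefore does not establish the statement for $k=2,4,8,\dots$. Note that if you simply cited the result of \cite{ab} for the odd-difference case (as the paper itself does for $k=2$), your argument would be complete, and the combination --- descent for $k$ with an odd prime factor, citation for $k$ a power of $2$ --- would be a legitimately different and in some respects cleaner route than the paper's.
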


\begin{proof}
Without loss of generality, we may assume that $k$ is prime. The case $k = 2$ was proved in \cite{ab}. As in \cite{ab}, note that every permutation of $\left\{1,2,\dots,11\right\}$ with first element $2$ and second element $1$ contains an arithmetic progression of length $3$ with difference not divisible by $k$. This is trivially true for $k \geq 7$ and is checked with a computer for $k = 3$ and $k = 5$. Let $P = p_1 p_2 \dots$ be any permutation of the positive integers. Let $c$ be the least index such that $p_{c}-p_{1}$ is not divisible by $k$ and $p_{c} > p_{1}$, and let $p_{j} = max(p_{1},\dots,p_{c-1})$. 

If $p_{j} < 2p_{c}-p_{1}$, then $p_{1},p_{c},2p_{c}-p_{1}$ is an arithmetic progression of length $3$ in $P$. If $p_{j} \geq 2p_{c}-p_{1}$, then let $d = p_{j}-p_{c}$. Note that $d$ is not divisible by $k$, and $p_{j}$ occurs before $p_{j}-d$ in $P$, so we can apply the result in the first sentence of this proof to the set $\left\{p_{j}-d, p_{j}, p_{j}+d, \dots, p_{j}+9d\right\}$ to obtain an arithmetic progression of length $3$ in $P$ with difference not divisible by $k$.
\end{proof}

Although the result below implies that permutations of the positive integers always contain $(r, s)$ $3$-progressions, in the next section we will show how to find permutations of $1, \dots, n$ that avoid $(r, s)$ $3$-progressions for all $n > 0$.

\begin{prop}
For all positive integers $r$ and $s$, every permutation of the positive integers contains an $(r, s)$ $3$-progression.
\end{prop}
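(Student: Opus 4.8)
The plan is to adapt the maximal-element argument of the preceding proposition. Write $P = p_1 p_2 \cdots$ and set $a = p_1$. Let $c$ be the least index with $p_c > p_1$ and $r \mid (p_c - p_1)$; such an index exists because each of the values $p_1 + r, p_1 + 2r, \ldots$ occurs somewhere in $P$. Put $d = (p_c - p_1)/r$, so $p_c = p_1 + rd$ with $d \ge 1$, and let $p_j = \max(p_1, \ldots, p_{c-1})$.

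First I would dispose of the easy case, exactly as in the cited method. If $p_j < p_1 + (r+s)d$, then $w = p_1 + (r+s)d$ exceeds both $p_j$ and $p_c = p_1 + rd$, so $w$ has not appeared by position $c$ and therefore occurs later in $P$. Hence $p_1, p_c, w$ occur in this positional order and form an $(r,s)$ $3$-progression with common difference $d$, and we are done.

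The substantive case is $p_j \ge p_1 + (r+s)d$. Here I set $d' = p_j - p_c \ge sd \ge 1$ and consider the arithmetic-progression-structured set $S = \{p_j - d', p_j, p_j + d', \ldots, p_j + Nd'\}$ for a parameter $N = N(r,s)$ to be fixed. Since $p_j = \max(p_1, \ldots, p_{c-1})$, among the elements of $S$ the value $p_j$ occurs first (at position $j$) and $p_j - d' = p_c$ occurs second (at position $c$), while every $p_j + id'$ with $i \ge 1$, being larger than $p_j$, occurs after position $c$. Rescaling by $x \mapsto (x - p_j)/d'$ turns $S$ into $\{-1, 0, 1, \ldots, N\}$, and a progression there with integer common difference pulls back to an $(r,s)$ $3$-progression in $P$ (with common difference a multiple of $d'$). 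So it suffices to prove the following finite base case: for a suitable $N = N(r,s)$, every permutation of $\{-1, 0, 1, \ldots, N\}$ in which $0$ occupies the first position and $-1$ the second contains an $(r,s)$ $3$-progression.

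The base case is where essentially all the difficulty lies, and proving it uniformly in $r$ and $s$ is the main obstacle. In the divisibility proposition the analogous statement was trivial for large $k$ and needed only a finite check for $k \in \{3,5\}$, whereas here $r$ and $s$ are unbounded and $N$ must grow with $r+s$, so a genuinely general argument is required. My approach is to use $0$ and $-1$ as anchors: if some admissible $t$ has $rt$ before $(r+s)t$ then $0, rt, (r+s)t$ is the progression, and symmetrically $-1, rt-1, (r+s)t-1$ works if $rt - 1$ precedes $(r+s)t-1$. A permutation avoiding all of these must place $(r+s)t$ before $rt$ and $(r+s)t-1$ before $rt-1$ for every admissible $t$; simultaneously, to avoid a progression lying entirely in the tail it must order $\{1, \ldots, N\}$ with no $(r,s)$ $3$-progression at all. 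The crux is that these requirements become incompatible once $N$ is large: the anchor constraints force many ``larger value earlier'' relations — for instance when $r=s=1$ they force every $v$ to appear after both $2v$ and $2v+1$ — and I would show that no ordering satisfying all of them can also avoid an in-tail progression. Producing an explicit threshold $N(r,s)$ and verifying this incompatibility uniformly is the technical heart, and I expect it to require a careful chaining or pigeonhole argument rather than the short finite computation that sufficed in the length-bounded divisibility case.
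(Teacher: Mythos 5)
Your reduction is set up correctly: the easy case ($p_j < p_1 + (r+s)d$) is sound, and in the hard case the induced ordering of $S = \{p_j - d', p_j, p_j + d', \ldots, p_j + Nd'\}$ does begin with $0$ then $-1$ after rescaling, with progressions pulling back as you say. But the proposal is not a proof, because the finite base case that carries all the weight is never established; you acknowledge it is ``the technical heart'' and only describe how you would attack it. Worse, the attack you describe cannot succeed: every candidate progression in your anchor argument ($0, rt, (r+s)t$ and $-1, rt-1, (r+s)t-1$, and the in-tail progressions you hope to force) has \emph{positive} difference parameter $t$, and with that reading the incompatibility you want is simply false. For every $N$, the permutation $0, -1, N, N-1, N-2, \ldots, 1$ begins with $0, -1$, satisfies all of your anchor constraints --- since $(r+s)t > rt$ and larger values occur earlier, $(r+s)t$ precedes $rt$, and likewise $(r+s)t - 1$ precedes $rt - 1$ --- and its decreasing tail contains no progression with positive difference, since such a progression has strictly increasing values occupying increasing positions. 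So no threshold $N(r,s)$ exists under that convention. The only way to salvage the base case is to count decreasing progressions (negative $t$) as well --- which is in fact what the finite check of $\{1,\ldots,11\}$ in the divisibility proposition implicitly relies on --- but your sketch never uses them, and even granting that convention you would still owe a proof that is uniform in $(r,s)$: a computer check cannot cover infinitely many pairs, and the threshold must grow with $r+s$.

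The paper's proof avoids the hard case and any finite base case altogether. Let $a_0, a_1, a_2, \ldots$ be the left-to-right maxima of $P$ (elements greater than everything before them); this subsequence is infinite because the values of $P$ are unbounded, so every prefix is eventually exceeded. By the pigeonhole principle two of these records are congruent modulo $r$, say $a_i \equiv a_j \pmod{r}$ with $i < j$; write $a_j - a_i = rd$ with $d \geq 1$. Then $a_i, a_j, a_j + sd$ is an $(r,s)$ $3$-progression occurring in positional order, because $a_j + sd > a_j$ and $a_j$ is a left-to-right maximum, so $a_j + sd$ must appear after $a_j$. Your instinct to imitate the two-case argument of the divisibility proposition is precisely what creates the intractable finite statement; the record-plus-pigeonhole idea is what your attempt is missing, and it disposes of the ``hard case'' (a large element early in the permutation) for free, since large early elements are exactly the records the argument exploits.
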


\begin{proof}
Let $P$ be an arbitrary permutation of the positive integers. Let $a_{0}, a_{1}, a_{2}, \dots$ denote the subsequence of $P$ consisting of elements that are greater than all elements to their left, so $a_{0}$ is the first element of $P$. By the pigeonhole principle, there are $2$ elements $a_{i} \equiv a_{j} \mod r$ with $i < j$. Then $a_{i}, a_{j}, a_{j}+\frac{s}{r}(a_{j}-a_{i})$ is an $(r, s, 3)$ progression in $P$.
\end{proof}

\section{Finite permutations avoiding progressions}\label{finp}

Let $\theta_{k}(n)$ denote the number of permutations of $1, \dots, n$ that avoid arithmetic progressions of length $k$. The first result below generalizes the well-known lower bound for the number of permutations of $1, \dots, n$ avoiding arithmetic progressions of length $3$. Later in the section, we are able to extend this same lower bound to certain $(r_{1}, \dots, r_{k-1})$ $k$-progressions.

\begin{prop}
For all $\epsilon > 0$, $\theta_{k}(n) \geq (k-1)!^{(\frac{1}{k-2}-\epsilon)n}$ if $n$ is a sufficiently large power of $k-1$
\end{prop}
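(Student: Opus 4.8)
The plan is to set up a self-similar recursive construction and reduce the bound to solving a simple recurrence. Write $n = (k-1)m$ and partition $\{1,\dots,(k-1)m\}$ into the $k-1$ residue classes modulo $k-1$. Each class has exactly $m$ elements and is the image of $\{1,\dots,m\}$ under an affine map $x \mapsto (k-1)x + c$, and such a map sends $k$-term arithmetic progressions to $k$-term arithmetic progressions while preserving relative order. So for each class I would take a permutation of $\{1,\dots,m\}$ avoiding arithmetic progressions of length $k$, transport it through the affine map to order that class, choose an arbitrary ordering of the $k-1$ classes as contiguous blocks, and concatenate. I claim every such concatenation avoids arithmetic progressions of length $k$, which yields the recurrence $\theta_{k}((k-1)m) \geq (k-1)! \, \theta_{k}(m)^{k-1}$: the factor $(k-1)!$ counts the orderings of the blocks, and the factor $\theta_{k}(m)^{k-1}$ counts the independent internal orderings.

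The core of the argument is verifying that the concatenation is free of length-$k$ progressions, and I would do this by a case analysis on the common difference $\delta$ of a hypothetical progression read off at increasing positions $q_{1} < \cdots < q_{k}$, with values $p_{q_{j}} = p_{q_{1}} + (j-1)\delta$. If $\delta \equiv 0 \pmod{k-1}$, then all $k$ terms lie in a single residue class, hence in one block; pulling the progression back through the affine map produces a length-$k$ progression in that block's permutation of $\{1,\dots,m\}$ (the positions stay increasing), contradicting its progression-freeness. If $\delta \not\equiv 0 \pmod{k-1}$, then the residues $c_{j} = (p_{q_{1}} + (j-1)\delta) \bmod (k-1)$ are periodic with period $p = (k-1)/\gcd(\delta,k-1)$, which satisfies $2 \leq p \leq k-1$; thus $c_{1} = c_{1+p}$ with $3 \leq 1+p \leq k$, while $c_{1} \neq c_{2}$. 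Because the blocks occupy contiguous position-ranges in the chosen order and the $q_{j}$ increase, the block-order-index of $p_{q_{j}}$ must be nondecreasing in $j$; applied to the indices $1, 2, 1+p$ this forces $c_{1} = c_{2}$, a contradiction. Crucially this holds for \emph{every} ordering of the blocks, which is what licenses the full factor of $(k-1)!$.

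With the recurrence in hand, I would finish by iteration. The base case is $\theta_{k}(k-1) = (k-1)!$, since every permutation of $\{1,\dots,k-1\}$ is too short to contain a length-$k$ progression. Setting $n = (k-1)^{t}$ and taking logarithms to base $(k-1)!$ turns the recurrence into $L_{t} \geq 1 + (k-1)L_{t-1}$, whose solution is $L_{t} \geq \frac{(k-1)^{t}-1}{k-2}$, that is, $\theta_{k}((k-1)^{t}) \geq (k-1)!^{\,(n-1)/(k-2)}$. Since $\frac{n-1}{k-2} \geq \left(\frac{1}{k-2} - \epsilon\right)n$ as soon as $n \geq \frac{1}{\epsilon(k-2)}$, the stated bound follows for every sufficiently large power of $k-1$; the $\epsilon$ and the largeness hypothesis merely absorb the additive $-\frac{1}{k-2}$ loss in the exponent and the restriction to powers of $k-1$.

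The main obstacle I anticipate is the case $\delta \not\equiv 0 \pmod{k-1}$: one must argue cleanly that the periodicity of the residue sequence, for every value of $\gcd(\delta,k-1)$ and simultaneously for every block ordering, forces a repeated block incompatible with increasing positions. The bookkeeping that translates ``increasing positions in the concatenation'' into ``nondecreasing block-order-index,'' together with checking that the affine maps preserve both order and progression structure in the case $\delta \equiv 0$, are the routine-but-necessary details to get right.
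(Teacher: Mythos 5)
Your proposal is correct and follows essentially the same route as the paper: the recurrence $\theta_{k}((k-1)m) \geq (k-1)!\,\theta_{k}(m)^{k-1}$ via the residue classes modulo $k-1$, the base case $\theta_{k}(k-1)=(k-1)!$, and iteration at powers of $k-1$ to get exponent $\frac{(k-1)^{t}-1}{k-2}$. The one difference is that you verify in full the step the paper merely asserts---that every block ordering of the concatenation avoids length-$k$ progressions---and your periodicity argument for $\delta \not\equiv 0 \pmod{k-1}$ does this cleanly.
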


\begin{proof}
We prove this by extending the lower bound proof used for $k = 3$. First, note that $\theta_{k}((k-1)n) \geq (k-1)! \theta_{k}(n)^{k-1}$ for all $k, n > 0$ since we can build permutations of $\left\{1, \dots, (k-1)n\right\}$ avoiding arithmetic progressions of length $k$ using permutations of $1, \dots, n$ avoiding arithmetic progressions of length $k$. Specifically let $A_{1}, \dots, A_{k-1}$ denote the subsets of $\left\{1, \dots, (k-1)n\right\}$ such that the elements in $A_{i}$ are congruent to $i$ mod $k-1$. For each $i$, there are $\theta_{k}(n)$ choices for how to permute the elements of $A_{i}$ and avoid arithmetic progressions of length $k$. We can concatenate the $A_{i}^{*}$'s in any order for a total of $(k-1)!$ possible orderings. 

First note that $\theta_{k}(k-1) = (k-1)!$ and in general $\theta_{k}((k-1)^n) = (k-1)!^{\sum_{i = 0}^{n-1}(k-1)^{i}}$. Observe that for all $\epsilon > 0$, there exists $N > 0$ such that for all $n > N$, $\frac{\sum_{i = 0}^{n-1}(k-1)^{i}}{(k-1)^n}-\frac{1}{k-2} < \epsilon$. 
\end{proof}

The result below is used in the lower bound constructions for the density proofs in the last section of this paper. 

\begin{prop}
For all positive integers $r$ and $s$ such that $2$ divides neither $r$ nor $s$, there exists a permutation of $1, \dots, n$ avoiding $(r, s)$ $3$-progressions.
\end{prop}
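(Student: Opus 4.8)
The plan is to generalize the classical recursive construction that produces permutations of $\{1,\dots,n\}$ avoiding ordinary $3$-term arithmetic progressions by separating values according to parity. The essential observation is that when both $r$ and $s$ are odd, an $(r,s)$ $3$-progression exhibits exactly the same parity behavior as an ordinary $3$-term progression. Writing the three terms as $x_1 = a$, $x_2 = a + rd$, and $x_3 = a + (r+s)d$, we have modulo $2$ that $x_2 - x_1 \equiv d$ and $x_3 - x_1 \equiv (r+s)d \equiv 0$, the latter because $r+s$ is even. Hence the outer terms $x_1$ and $x_3$ always share a parity, while the middle term $x_2$ agrees with them precisely when $d$ is even and differs from them precisely when $d$ is odd. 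This is exactly the dichotomy that drives the classical argument.

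With this in hand, I would define a permutation $\sigma_n$ of $\{1, \dots, n\}$ recursively: list first the odd values, arranged by applying $\sigma_{\lceil n/2 \rceil}$ under the bijection $2i - 1 \mapsto i$, and then the even values, arranged by applying $\sigma_{\lfloor n/2 \rfloor}$ under the bijection $2i \mapsto i$, with the base cases $n \le 2$ being trivial. I would then prove by induction on $n$ that $\sigma_n$ avoids $(r,s)$ $3$-progressions. Suppose positions $i_1 < i_2 < i_3$ carried values forming such a progression $x_1, x_2, x_3$ with parameter $d$; note $d \ne 0$, since the values in a permutation are distinct and $r, s > 0$.

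If $d$ is odd, then $x_2$ has the opposite parity from $x_1$ and $x_3$. Since all odd values precede all even values in $\sigma_n$, the middle term is forced either before both outer terms (when $x_2$ is odd and $x_1, x_3$ are even) or after both (when $x_2$ is even and $x_1, x_3$ are odd); either way the position $i_2$ cannot lie strictly between $i_1$ and $i_3$, a contradiction. If $d$ is even, all three values share a parity and hence lie in a single block, and the affine halving map used to define that block sends $x_1, x_2, x_3$ to an $(r,s)$ $3$-progression with parameter $d/2$, contradicting the inductive hypothesis for the smaller block.

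The main obstacle is conceptual rather than technical: one must recognize that the parity dichotomy underlying the classical construction depends only on $r$ and $s$ being odd, so that $r+s$ is even, and not on the special case $r = s = 1$. Once the parity bookkeeping is set up, the only point requiring care is verifying that halving within a parity class is an affine map preserving the $(r,s)$-progression structure, which holds exactly because the even-$d$ case lets us pass to the integer parameter $d/2$. The hypothesis that neither $r$ nor $s$ is even is used precisely here: if, say, $r$ were even, then $x_2 - x_1 = rd$ would be even for every $d$, so the middle term would never change parity and the block-ordering step of the argument would collapse.
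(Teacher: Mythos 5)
Your proof is correct and follows essentially the same approach as the paper: the recursive parity-splitting construction, where a cross-block progression is ruled out because $r$ and $s$ odd force $x_1 \equiv x_3 \pmod 2$ with $x_2$'s parity tied to $d$, and a within-block progression is ruled out by the inductive hypothesis via the affine halving map. The only differences are cosmetic (you place odds before evens rather than evens before odds, and you organize the cases by the parity of $d$ rather than by block membership), and your write-up is if anything more explicit than the paper's about why the within-block case invokes the recursion.
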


\begin{proof}
The permutation is obtained recursively. For $n \leq 2$, any permutation will work. For $n > 2$, split the integers $1, \dots, n$ into evens and odds, permute the evens and odds so that they avoid $(r, s)$ $3$-progressions, and then concatenate the two permutations with evens first. If there was an $(r, s)$ $3$-progression in the concatenation, then it could not be fully contained in the evens or fully contained in the odds, so without loss of generality suppose that the first two elements are in the evens and the last element is in the odds. Since $2$ divides neither $r$ nor $s$, this is a contradiction since $2$ divides the difference between the first two elements, but not the last two.
\end{proof}

Observe that the proof of the first result in this section can be applied to generalized arithmetic progressions. Specifically we obtain the following recursive inequality for generalized $(r_{1}, \dots, r_{k-1})$ $k$-progressions for which $k-1$ does not divide $r_{1}, \dots, r_{k-1}$.

\begin{prop}
Suppose that $k-1$ does not divide $r_{1}, \dots, r_{k-1}$. If $a$ is the number of permutations of $1, \dots, n$ that avoid $(r_{1}, \dots, r_{k-1})$ $k$-progressions and $b$ is the number of permutations of $1, \dots, n+1$ that avoid $(r_{1}, \dots, r_{k-1})$ $k$-progressions, then the number of permutations of $(k-1)n+j$ that avoid $(r_{1}, \dots, r_{k-1})$ $k$-progressions for $0 \leq j < k-1$ is at least $(k-1)! a^{k-1-j}b^{j}$.
\end{prop}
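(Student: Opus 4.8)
The plan is to mimic the construction behind the first proposition of this section, now keeping track of the two possible block sizes. First I would split $\{1,\dots,(k-1)n+j\}$ into its $k-1$ residue classes modulo $k-1$. Since the $j$ integers $(k-1)n+1,\dots,(k-1)n+j$ occupy the residues $1,\dots,j$, exactly $j$ of these classes have $n+1$ elements and the remaining $k-1-j$ have $n$ elements. Each class is itself an arithmetic progression, hence an affine image $x\mapsto c+(k-1)(x-1)$ of an initial segment $\{1,\dots,m\}$; and since an affine bijection $x\mapsto \alpha x+\beta$ with $\alpha\neq 0$ sends $a+\sum_{i\le t}r_i d$ to $(\alpha a+\beta)+\sum_{i\le t}r_i(\alpha d)$, it preserves the property of being an $(r_1,\dots,r_{k-1})$ $k$-progression. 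Therefore a class of size $n$ admits exactly $a$ avoiding permutations and a class of size $n+1$ admits exactly $b$. Permuting each class to avoid the progression and then concatenating the $k-1$ blocks in any of the $(k-1)!$ orders yields $(k-1)!\,a^{k-1-j}b^{j}$ permutations, all distinct since both the block order and the internal orders can be recovered from the permutation.

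It then remains to verify that every permutation built this way avoids $(r_1,\dots,r_{k-1})$ $k$-progressions. Suppose for contradiction that such a progression $t_0,\dots,t_{k-1}$, with $t_m=a+\sum_{i=1}^{m}r_i d$, occupied positions $i_0<\dots<i_{k-1}$. Reducing modulo $k-1$, the block containing $t_m$ is determined by $\rho_m\equiv a+\big(\sum_{i=1}^{m}r_i\big)d \pmod{k-1}$. If $d\equiv 0\pmod{k-1}$, then all $\rho_m$ coincide, so the entire progression lies inside a single block, contradicting the avoidance imposed within that class.

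Otherwise $d\not\equiv 0\pmod{k-1}$, and here I would invoke the hypothesis that $k-1$ divides none of the $r_m$ to show that consecutive terms fall in distinct blocks, i.e. that $\rho_m-\rho_{m-1}\equiv r_m d\not\equiv 0\pmod{k-1}$ for every $m$. This single congruence is the step I expect to demand the most care, since it is precisely where the divisibility assumption must be exploited. Granting it, no value is immediately repeated in the sequence $\rho_0,\dots,\rho_{k-1}$; as there are $k$ terms but only $k-1$ blocks, the pigeonhole principle yields indices $p<r$ with $\rho_p=\rho_r$, and necessarily $r\ge p+2$ because adjacent terms differ. Then $t_p$ and $t_r$ lie in one block while $t_{p+1}$ lies in a different block, which — the blocks being concatenated contiguously — is entirely before or entirely after the first block, so $i_{p+1}$ cannot lie strictly between $i_p$ and $i_r$, contradicting $i_p<i_{p+1}<i_r$. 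Hence no such progression appears, and $(k-1)!\,a^{k-1-j}b^{j}$ is a valid lower bound. The main obstacle is thus isolated in the congruence showing that successive terms change blocks; the counting and the contiguity argument are otherwise routine.
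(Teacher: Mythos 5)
You have reconstructed exactly the argument the paper intends: the paper gives no written proof of this proposition, saying only that the proof of the first result of the section ``can be applied'' to generalized progressions, and that proof is precisely your construction --- split $\{1,\dots,(k-1)n+j\}$ into residue classes modulo $k-1$ (of which $j$ have size $n+1$ and $k-1-j$ have size $n$), permute each class through the affine correspondence with an initial segment, and concatenate the blocks in any of the $(k-1)!$ orders. Your class-size count, the distinctness of the resulting permutations, and the contiguity-plus-pigeonhole argument in the case $d\not\equiv 0\pmod{k-1}$ are all correct, and they are exactly the details the paper leaves implicit.

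However, the step you flagged is a genuine gap, and it cannot be closed under the hypothesis as literally stated. From $k-1\nmid r_m$ and $k-1\nmid d$ one cannot conclude $k-1\nmid r_m d$ unless $\gcd(r_m,k-1)=1$: for composite $k-1$ take $k-1=6$, $r_m=2$, $d=3$, so that $r_m d\equiv 0\pmod{6}$. Moreover this is not merely a defect of the proof strategy; the construction itself can fail. Take $k=7$, $(r_1,\dots,r_6)=(2,2,2,2,2,2)$, $n=7$, $j=0$: the identity permutation of $\{1,\dots,7\}$ avoids $(2,\dots,2)$ $7$-progressions (such a progression spans $12d\geq 12$, exceeding the diameter $6$ of the set), yet its affine image $1,7,13,19,25,31,37$, which is a legitimate block of the constructed permutation of $\{1,\dots,42\}$, is itself a $(2,\dots,2)$ $7$-progression with $d=3$. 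This shows that within-class avoidance does not transfer either, and for the same underlying reason: a progression lying inside one residue class need not pull back to a progression of $\{1,\dots,n\}$ when its difference is not divisible by $k-1$. The argument --- yours and the paper's --- is correct exactly when each $r_m$ is relatively prime to $k-1$, in particular whenever $k-1$ is prime, where non-divisibility and coprimality coincide and your congruence follows at once from Euclid's lemma. So your instinct about where the difficulty sits was accurate, and the honest resolution is that the hypothesis must be read (or strengthened) as ``each $r_m$ is coprime to $k-1$'' for the proposed proof to go through.
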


\begin{cor}
Suppose that $k-1$ does not divide $r_{1}, \dots, r_{k-1}$. For all $\epsilon > 0$, the number of permutations of $1, \dots, n$ avoiding $(r_{1}, \dots, r_{k-1})$ $k$-progressions is at least $(k-1)!^{(\frac{1}{k-2}-\epsilon)n}$ if $n$ is a sufficiently large power of $k-1$.
\end{cor}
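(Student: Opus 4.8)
The plan is to reduce to the special case $j=0$ of the preceding Proposition and then replay, essentially verbatim, the geometric-series estimate from the first Proposition of this section. Write $f(n)$ for the number of permutations of $1,\dots,n$ that avoid $(r_{1},\dots,r_{k-1})$ $k$-progressions. Taking $j=0$ in the Proposition gives the multiplicative recursion
\[
f((k-1)n)\ \geq\ (k-1)!\,f(n)^{k-1},
\]
which is exactly the inequality that drove the earlier bound on $\theta_{k}$.

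Next I would fix the base case. Since a $(r_{1},\dots,r_{k-1})$ $k$-progression consists of $k$ distinct terms, it cannot occur among only $k-1$ distinct integers, so every permutation of $1,\dots,k-1$ avoids and $f(k-1)=(k-1)!$. Iterating the recursion along the powers $n=(k-1)^{m}$ then yields, by induction on $m$,
\[
f\!\left((k-1)^{m}\right)\ \geq\ (k-1)!^{\,e_{m}},\qquad e_{m}=\sum_{i=0}^{m-1}(k-1)^{i},
\]
since $e_{1}=1$ and $e_{m+1}=1+(k-1)e_{m}$ match the base value and the recursion respectively.

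Finally I would estimate the exponent. With $n=(k-1)^{m}$ one has $e_{m}=\frac{n-1}{k-2}$, so that $\frac{e_{m}}{n}=\frac{1}{k-2}\bigl(1-(k-1)^{-m}\bigr)\to\frac{1}{k-2}$ as $m\to\infty$. Hence for every $\epsilon>0$ there is an $N$ with $e_{m}\geq\bigl(\frac{1}{k-2}-\epsilon\bigr)n$ whenever $n=(k-1)^{m}>N$, giving $f(n)\geq(k-1)!^{(\frac{1}{k-2}-\epsilon)n}$, as claimed.

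Because the Proposition already supplies the key recursion, there is essentially no new difficulty here: the only points needing care are the hypothesis that $k-1$ divides none of the $r_{i}$ (which is precisely what licenses applying the Proposition), together with the clean base value $f(k-1)=(k-1)!$. The growth estimate itself is identical to the one used earlier for $\theta_{k}$, so I expect the only real \emph{observation} to be that the generalized recursion specializes to the ordinary one upon setting $j=0$; everything after that is the same bookkeeping of a geometric sum.
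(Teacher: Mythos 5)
Your proposal is correct and matches the paper's intended argument: the paper leaves the corollary as an immediate consequence of the preceding proposition, obtained exactly as you do by specializing to $j=0$ to recover the recursion $f((k-1)n) \geq (k-1)!\,f(n)^{k-1}$, using the base case $f(k-1)=(k-1)!$, and repeating the geometric-sum estimate from the proof of the first proposition of that section. Your explicit justification of the base case (a $k$-progression requires $k$ distinct integers) is a detail the paper glosses over, but the route is the same.
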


\section{Density bounds for generalized arithmetic progressions}\label{gap}

Davis et al \cite{dga} asked for bounds on $\alpha_{\mathbb{Z}^{+}}(3)$ and $\beta_{\mathbb{Z}^{+}}(3)$, and the lower bounds  $\alpha_{\mathbb{Z}^{+}}(3) \geq \frac{1}{2}$ and $\beta_{\mathbb{Z}^{+}}(3) \geq \frac{1}{4}$ were proved in \cite{ab}. It is natural to generalize the problem from Davis et al to bound $\alpha_{\mathbb{Z}^{+}}(P)$ and $\beta_{\mathbb{Z}^{+}}(P)$ for any generalized arithmetic progression $P$. 

We extend the lower bounds from \cite{ab} to $(r, s)$ $3$-progressions in the result below, obtaining lower bounds on the upper and lower densities in terms of $r$ and $s$.

\begin{prop}
Suppose that $2$ divides neither $r$ nor $s$. Then there exist sets of positive integers with lower density $\frac{r s}{(r+s)^2}$ and upper density $\frac{s}{r+s}$ that avoid $(r, s)$ $3$-progressions.
\end{prop}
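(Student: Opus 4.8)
The plan is to adapt the block-and-concatenate method of the earlier density propositions. I would define geometric intervals $I_n = [\ceil{a^n},\floor{ba^n}]$ with $a = \frac{r^2+rs+s^2}{r^2}$ and $b = \frac{r^2+rs+s^2}{r(r+s)}$; a short check gives $1 < b < a$, so the intervals increase and leave genuine gaps between them. Using the previous proposition (available precisely because $2$ divides neither $r$ nor $s$) I rearrange each $I_n$ into a block $I_n^*$ avoiding $(r,s)$ $3$-progressions, let $S$ be the union, and form the permutation $I_1^* I_2^* \cdots$. Evaluating $|S \cap [1,N]|/N$ at the right endpoints $N=\floor{ba^n}$ (where density peaks) and at the left endpoints $N=\ceil{a^{n+1}}$ (where it bottoms out) yields upper density $\frac{(b-1)a}{b(a-1)} = \frac{s}{r+s}$ and lower density $\frac{b-1}{a-1} = \frac{rs}{(r+s)^2}$. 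These two exact target values are what force this choice of $a$ and $b$, so there is no freedom to perturb the parameters.

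It then remains to rule out an $(r,s)$ $3$-progression $v_1,v_2,v_3$ with gaps $v_2-v_1=rd$, $v_3-v_2=sd$ occurring in that order in the concatenation. Since the blocks appear in increasing order and higher blocks hold strictly larger values (as $a>b$), a decreasing progression ($d<0$) would have to lie inside a single block and is thus excluded; so I may assume $d>0$ and $v_1<v_2<v_3$, lying in blocks $I_{n_1}^*,I_{n_2}^*,I_{n_3}^*$ with $n_1\le n_2\le n_3$. The all-equal case is handled by the within-block rearrangement, so the work is the three remaining equality patterns of $(n_1,n_2,n_3)$.

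The two routine cases should carry strict slack. If $n_1=n_2<n_3$, then $v_2-v_1 < (b-1)a^{n_1}$ forces $v_3 = \frac{r+s}{r}v_2 - \frac{s}{r}v_1 < a^{n_1}\frac{b(r+s)-s}{r} = a^{n_1}\frac{r^2+s^2}{r^2} < a^{n_1+1}$, so $v_3$ cannot reach a later block, a contradiction with room to spare. If $n_1<n_2<n_3$, then $v_3-v_2 \ge a^{n_2}(a-b)$ while $v_2-v_1 < ba^{n_2}$ (strict because $v_1>0$), giving $\frac{v_3-v_2}{v_2-v_1} > \frac{a-b}{b} = \frac{s}{r}$, so the gap ratio is wrong and the triple is not an $(r,s)$ progression.

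The delicate case, and the main obstacle, is $n_1 < n_2 = n_3$, where $v_1$ sits in an earlier block while $v_2,v_3$ share a block $I_n$. Here the parameters are exactly critical: minimizing $v_3 = \frac{r+s}{r}v_2 - \frac{s}{r}v_1$ over $v_2\ge\ceil{a^n}$ and $v_1\le\floor{ba^{n-1}}$ gives, via the defining relation between $a$ and $b$, a real-number minimum of exactly $ba^n$, the top of $I_n$. Thus in the idealization the corner triple (top of $I_{n-1}$, bottom of $I_n$, top of $I_n$) is a genuine $(r,s)$ progression, so avoidance must come from integrality. The observation that rescues the construction is that $ba^{n-1} = \frac{(r^2+rs+s^2)^n}{r^{2n-1}(r+s)}$ is never an integer: because $r$ and $s$ are both odd, the numerator $r^2+rs+s^2$ is odd whereas $r+s$ is even. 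Hence $\floor{ba^{n-1}} < ba^{n-1}$ strictly, which makes the minimization strict and forces $v_3 > ba^n \ge \floor{ba^n}$ for \emph{every} configuration in this case, so $v_3$ overshoots $I_n$ into the gap and cannot lie in $S$. This is exactly where the hypothesis that $2$ divides neither $r$ nor $s$ does a second job, beyond guaranteeing that the within-block rearrangements exist. The part demanding the most care is verifying this strict overshoot uniformly in $n$ and confirming that no nearby integer configuration can slip $v_3$ back inside $I_n$.
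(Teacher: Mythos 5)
Your proof is correct, and it rests on the same construction as the paper's: geometric blocks $I_n=[\ceil{a^n},\floor{ba^n}]$, each rearranged internally via the finite-permutation result (which needs $r,s$ odd), with the same trichotomy over how an increasing triple can straddle blocks. Where you genuinely depart from the paper is in the parameter choice and the treatment of the critical case $n_1<n_2=n_3$. The paper's second sufficient condition is strict, $b<\frac{1+s/r}{1+(s/r)/a}$, so it fixes $b=\frac{r^2+rs+s^2}{r^2+rs}$ and lets $a$ tend to $\frac{r^2+rs+s^2}{r^2}$ from above; for every admissible $a$ the resulting set has upper density strictly below $\frac{s}{r+s}$ and lower density strictly below $\frac{rs}{(r+s)^2}$, so read literally the paper produces only densities arbitrarily close to the claimed values (enough for lower-bounding the suprema $\alpha_{\mathbb{Z}^{+}}$, $\beta_{\mathbb{Z}^{+}}$, but not the exact attainment the proposition asserts). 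You instead sit exactly at the critical point $a=\frac{r^2+rs+s^2}{r^2}$, $b=\frac{a}{1+s/r}$, where the real-number estimate in the case $n_1<n_2=n_3$ degenerates to the equality $v_3\ge ba^{n}$, and you restore strictness by integrality: with $r,s$ odd, $r^2+rs+s^2$ is odd while $r+s$ is even, so $ba^{n-1}=\frac{(r^2+rs+s^2)^n}{r^{2n-1}(r+s)}$ is never an integer, giving $v_1\le\floor{ba^{n-1}}<ba^{n-1}$ and hence $v_3>ba^n\ge\floor{ba^n}$. I checked the identity $\bigl(1+\frac{s}{r}\bigr)a-\frac{s}{r}b=ab$ on which this hinges, as well as your two routine cases; all are sound. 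This is a second, independent use of the parity hypothesis, which the paper needs only for the within-block rearrangements. What your route buys is the proposition exactly as worded — sets attaining the stated densities — at the cost of the integrality computation; the paper's route is shorter and parameter-flexible, but establishes the claimed densities only in the limit.
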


\begin{proof}
Define intervals $I_{1}, I_{2}, \dots$ of the form $I_{n} = [\ceil{a^{n}},\floor{ba^{n}}]$ for parameters $a, b$ with $1 < b < a$ to be chosen later, and let $I_{n}^{*}$ be obtained by rearranging $I_{n}$ so that it contains no $(r, s)$ $3$-progression. The set of positive integers consisting of the elements of the union of the $I_{n}$'s has lower density $\frac{b-1}{a-1}$ and upper density $(1-\frac{1}{b})(1+\frac{1}{a-1})$. 

For the permutation $I_{1}^{*} I_{2}^{*} \dots$ to avoid $(r, s)$ $3$-progressions, it suffices for $(b a^{i} - 0) \frac{s}{r} \leq a^{i+1}-b a^{i}$ and $(a^{i+1}-b a^{i}) \frac{s}{r}+a^{i+1} > b a^{i+1}$. This is equivalent to $b \leq \frac{a}{1+s/r}$ and $b < \frac{1+s/r}{1+\frac{s/r}{a}}$. Since $b$ can be arbitrarily close to $\frac{1+s/r}{1+\frac{s/r}{a}}$, it follows that we can obtain upper density $\frac{s}{r+s}$ and lower density $\frac{r s}{(r+s)^2}$ with $a$ approaching $\frac{r^2 + r s + s^2}{r^2}$ and $b =  \frac{r^2 + r s + s^2}{r^2+r s}$.
\end{proof}

Erdos and Graham \cite{ega} asked whether the positive integers can be partitioned into two sets which can both be permuted to avoid arithmetic progressions of length $3$. The conjecture in \cite{ab} that $\alpha_{\mathbb{Z}^{+}}(3) = \frac{1}{2}$ and $\beta_{\mathbb{Z}^{+}}(3) = \frac{1}{4}$ would negatively answer this question, since it would suffice to prove that  $\alpha_{\mathbb{Z}^{+}}(3)+\beta_{\mathbb{Z}^{+}}(3) < 1$. 

It is natural to ask Erdos and Graham's question for $(r, s)$ $3$-progressions in general, since the original question is just the case of $(1,1)$ $3$-progressions. We observe that $\frac{r s}{(r+s)^2}+\frac{s}{r+s} < 1$ for all $r, s > 0$, so if the bounds in our last proof are tight, then the positive integers could not be partitioned into two sets that can both be permuted to avoid $(r, s)$ $3$-progressions.

In the next result, we obtain the same lower bound for the upper density when we consider all integers instead of just positive integers, but we get a different lower bound on the lower density. 

\begin{prop}
Suppose that $2$ divides neither $r$ nor $s$. Then there exist sets of integers with lower density $\frac{r s}{(r+s)(r+2s)}$ and upper density $\frac{s}{r+s}$ that avoid $(r, s)$ $3$-progressions.
\end{prop}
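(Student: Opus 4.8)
The plan is to adapt the symmetric-block construction used for $\alpha_{\mathbb{Z}}(3)$ and $\beta_{\mathbb{Z}}(3)$ to the $(r,s)$ setting, reusing the interval scaling from the preceding proposition on positive integers. First I would fix parameters $1<b<a$ (to be optimized) and define $A_{n}=[\ceil{a^{n}},\floor{ba^{n}}]$, let $B_{n}$ consist of the additive inverses of the elements of $A_{n}$, set $X_{n}=A_{n}\cup B_{n}$, and let $X_{n}^{*}$ be a rearrangement of $X_{n}$ that contains no $(r,s)$ $3$-progression. Such a rearrangement exists because $2$ divides neither $r$ nor $s$: the even/odd splitting argument from the earlier proposition on finite permutations applies verbatim to any finite set of integers, not just to $\left\{1,\dots,n\right\}$. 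The permutation is $X_{1}^{*}X_{2}^{*}\dots$, and since the underlying set is symmetric about $0$, its lower and upper densities coincide with those of its positive part, namely $\frac{b-1}{a-1}$ and $(1-\frac{1}{b})(1+\frac{1}{a-1})$.

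Next I would rule out cross-block progressions. Suppose $T_{1},T_{2},T_{3}$ is an $(r,s)$ $3$-progression appearing in this order, with $T_{2}-T_{1}=rd$ and $T_{3}-T_{2}=sd$. Because both the set and the block indices are invariant under negation, I may assume $d>0$, so that $T_{1}<T_{2}<T_{3}$ while the block indices are nondecreasing along the permutation. I would then argue using the last two terms. If $T_{2}$ and $T_{3}$ lie in different blocks, then $T_{3}$ lies in a strictly higher block, so the forward jump satisfies $sd\ge a^{i+1}-ba^{i}$ where $i$ is the block index of $T_{2}$; writing $T_{1}=\frac{r+s}{s}T_{2}-\frac{r}{s}T_{3}$ and taking the extreme admissible values shows $T_{1}<-ba^{i}$, which is below every value occurring in blocks of index at most $i$, a contradiction, provided $b<\frac{ar}{r+2s}$. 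If instead $T_{2}$ and $T_{3}$ lie in the same block, then either $T_{1}$ lies in that block too (impossible, since $X_{i}^{*}$ avoids $(r,s)$ $3$-progressions) or $T_{1}$ lies in an earlier block, in which case the same jump estimate forces $T_{3}>ba^{i}$, pushing $T_{3}$ out of its block; this contradiction needs only $1+\frac{s}{r}(1-\frac{b}{a})>b$, exactly the condition from the positive-integer case. These two cases are exhaustive, so no progression survives.

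Finally I would optimize. Since the upper density equals $\frac{a}{b}$ times the lower density, making the first condition tight, i.e. $b=\frac{ar}{r+2s}$ so that $\frac{a}{b}=1+\frac{2s}{r}$, fixes the ratio of the two densities at $1+\frac{2s}{r}$; imposing the second condition tightly then gives $a=\frac{r^{2}+2rs+2s^{2}}{r^{2}}$ and $b=\frac{r^{2}+2rs+2s^{2}}{r^{2}+2rs}$, which yield lower density $\frac{rs}{(r+s)(r+2s)}$ and upper density $\frac{s}{r+s}$. As in the $\beta_{\mathbb{Z}}(3)$ proof, at these boundary values the floor and ceiling functions supply the strictness needed in both inequalities for all large $i$. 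The main obstacle is that, unlike the positive-integer construction, the blocks here are nested: block $n+1$ extends both above and below block $n$, so positional order no longer matches value order. Tracking the negative reach $-ba^{n}$ of each block correctly in the different-blocks case is precisely what replaces the denominator $r+s$ of the positive-integer lower density by $r+2s$, and it is the step most in need of care.
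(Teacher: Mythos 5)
Your proposal is correct and follows essentially the same route as the paper: the same symmetric blocks $[\ceil{a^{n}},\floor{ba^{n}}]$ together with their additive inverses, rearranged internally via the even/odd splitting result, with the same two constraints $b<\frac{ar}{r+2s}$ (cross-block case) and $b<\frac{1+s/r}{1+\frac{s/r}{a}}$ (same-block case), and the same optimal parameters $a=\frac{r^{2}+2rs+2s^{2}}{r^{2}}$, $b=\frac{r^{2}+2rs+2s^{2}}{r^{2}+2rs}$. Your explicit three-term case analysis and the floor/ceiling treatment of the boundary values merely spell out details the paper leaves implicit (the paper instead lets $a$ approach its boundary value), so the two arguments are substantively identical.
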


\begin{proof}
Define intervals $I_{1}, I_{2}, \dots$ of the form $I_{n} = [\ceil{a^{n}},\floor{ba^{n}}]$ for parameters $a, b$ with $1 < b < a$ to be chosen later, and $J_{1}, J_{2}, \dots$ such that $J_{n}$ contains the additive inverses of the elements of $I_{n}$. Let $X_{n}^{*}$ be obtained by rearranging $I_{n} \cup J_{n}$ so that it contains no $(r, s)$ $3$-progression. 

For the permutation $X_{1}^{*} X_{2}^{*} \dots$ to avoid $(r, s)$ $3$-progressions, it suffices for $(b a^{i} - (-b a^{i})) \frac{s}{r} < a^{i+1}-b a^{i}$ and $(a^{i+1}-b a^{i}) \frac{s}{r}+a^{i+1} > b a^{i+1}$. This is equivalent to $b < \frac{a}{1+2s/r}$ and $b < \frac{1+s/r}{1+\frac{s/r}{a}}$. It follows that we can obtain upper density $\frac{s}{r+s}$ and lower density $\frac{r s}{(r+s)(r+2s)}$ with $a$ approaching $\frac{r^2 + 2r s + 2s^2}{r^2}$ and $b = \frac{r^2 + 2r s + 2s^2}{r^2+2r s}$.
\end{proof}

As with the last result, we see that $\frac{r s}{(r+s)(r+2s)}+\frac{s}{r+s} < 1$ for all $r, s > 0$, so if the bounds in our last proof are tight, then the integers could not be partitioned into two sets that can both be permuted to avoid $(r, s)$ $3$-progressions.

The last proof below is a generalization of the construction of the permutation of the integers avoiding arithmetic progressions of length $6$ in Section \ref{density} to a certain class of generalized arithmetic progressions of length $6$.

\begin{prop}
For all $r, s > 0$ for which $2$ divides neither $r$ nor $s$, there exist permutations of the integers that avoid $(r^{4},r^{3}s,r^{2}s^{2},r s^{3},s^{4})$ $6$-progressions.
\end{prop}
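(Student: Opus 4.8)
The plan is to mirror the construction and the two-case argument of Proposition~\ref{mainres}, replacing the growth factor $10$ by a factor depending on $s/r$ and replacing ``avoids arithmetic progressions of length $3$'' inside each block by ``avoids $(r,s)$ $3$-progressions''. The structural fact that makes this work is that \emph{any three consecutive terms} of an $(r^{4},r^{3}s,r^{2}s^{2},rs^{3},s^{4})$ $6$-progression form an $(r,s)$ $3$-progression. Indeed, writing the terms in order as $T_{1},\dots,T_{6}$, the $j$-th gap is $T_{j+1}-T_{j}=r^{5-j}s^{j-1}d$, so two consecutive gaps are $r^{5-j}s^{j-1}d$ and $r^{4-j}s^{j}d$; setting $D=r^{4-j}s^{j-1}d$ these equal $rD$ and $sD$, which is exactly the gap pattern of an $(r,s)$ $3$-progression. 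Hence if three consecutive terms of the $6$-progression ever land in a single block, that block contains a forbidden $(r,s)$ $3$-progression.

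Next I would fix an integer $c>1+2\left(\tfrac{s}{r}+\tfrac{s^{2}}{r^{2}}+\tfrac{s^{3}}{r^{3}}+\tfrac{s^{4}}{r^{4}}\right)$, define $A_{m}=[c^{m},c^{m+1})$ for $m\ge 0$, let $B_{m}$ consist of the additive inverses of the elements of $A_{m}$, and let $X_{m}^{*}$ be a rearrangement of $A_{m}\cup B_{m}$ avoiding $(r,s)$ $3$-progressions, giving the permutation $0\,X_{0}^{*}X_{1}^{*}\cdots$ of the integers. Since $2$ divides neither $r$ nor $s$, such a rearrangement exists: the even/odd splitting argument proving the earlier proposition on permutations of $1,\dots,n$ applies verbatim to any finite set of integers (an all-even set is handled by dividing through by $2$ and recursing, since then the progression parameter $d$ must be even), so in particular it applies to the symmetric block $A_{m}\cup B_{m}$.

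I would then argue by contradiction. Suppose some $(r^{4},r^{3}s,r^{2}s^{2},rs^{3},s^{4})$ $6$-progression $T_{1},\dots,T_{6}$ appears in the permutation in this order; its block indices are nondecreasing. Let $T_{2}$ lie in block $i$, so $|T_{1}|,|T_{2}|<c^{i+1}$ and hence $|d|=|T_{2}-T_{1}|/r^{4}<2c^{i+1}/r^{4}$. If $T_{3}$ also lies in block $i$, then $T_{2},T_{3},T_{4}$ is an $(r,s)$ $3$-progression, forcing $T_{4}$ out of block $i$ and into a block of index $\ge i+1$; bounding $|T_{4}|,|T_{5}|,|T_{6}|$ successively via the gaps $r^{2}s^{2}d,\ rs^{3}d,\ s^{4}d$ gives each an absolute value below $c^{i+1}\bigl(1+2\tfrac{s^{2}}{r^{2}}+2\tfrac{s^{3}}{r^{3}}+2\tfrac{s^{4}}{r^{4}}\bigr)<c^{i+2}$, so $T_{4},T_{5},T_{6}$ all lie in block $i+1$, a forbidden $(r,s)$ $3$-progression. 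If instead $T_{3}$ lies in a later block, the same estimate using the gaps $r^{3}sd,\ r^{2}s^{2}d,\ rs^{3}d$ shows $T_{3},T_{4},T_{5}$ all lie in block $i+1$, again a contradiction. The choice of $c$ is precisely what forces both chains of inequalities to close.

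The main obstacle, and the only genuine departure from Proposition~\ref{mainres}, is that the gaps here are not constant but scale by the ratio $s/r$: the bound on $|d|$ comes only from the first gap $r^{4}d$, whose endpoints sit in blocks of index $\le i$, whereas the later gaps can be as large as $(s/r)^{4}$ times bigger, so the growth factor must dominate $2\sum_{j=1}^{4}(s/r)^{j}$ rather than a fixed constant. Provided $c$ is taken this large, the same slack also absorbs the off-by-one effects of the ceilings and floors used to make the integer blocks tile $\mathbb{Z}$, exactly as the factor $10$ left ample slack in Proposition~\ref{mainres}.
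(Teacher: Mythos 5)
Your proposal is correct and takes essentially the same approach as the paper, which uses the identical block construction and simply states that the argument of Proposition \ref{mainres} carries over; your write-up in fact makes explicit two details the paper leaves implicit (that consecutive triples of the $6$-progression are $(r,s)$ $3$-progressions, and that the symmetric blocks $A_m \cup B_m$ admit the required rearrangement). The only divergence is that your constant $c > 1+2\left(\tfrac{s}{r}+\tfrac{s^2}{r^2}+\tfrac{s^3}{r^3}+\tfrac{s^4}{r^4}\right)$ is larger than the paper's $a > 1+2\left(\tfrac{s}{r}+\tfrac{s^2}{r^2}+\tfrac{s^3}{r^3}\right)$, which your argument genuinely needs because in the same-block case you bound $|d|$ via the first gap $r^4 d$, whereas the paper's smaller constant suffices only if one instead bounds $|d|$ via the gap $T_3-T_2=r^3 s\, d$, whose endpoints both lie in block $i$; either choice yields a valid proof.
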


\begin{proof}
Define a sequence of intervals $A_{0}, A_{1}, \dots$ so that $A_{i} = [a^{i},a^{i+1})$ and $B_{0}, B_{1}, \dots$ so that $B_{i}$ contains the additive inverses of the elements in $A_{i}$. Choose $a > 1+2(\frac{s}{r}+\frac{s^2}{r^2}+\frac{s^3}{r^3})$ to be an integer. Let $X_{i}^{*}$ be obtained by rearranging $A_{i} \cup B_{i}$ so that it contains no $(r,s)$ $3$-progression. Finally, consider the permutation $P = 0 X_{0}^{*} X_{1}^{*} \dots$.

Suppose that $P$ contained an $(r^{4},r^{3}s,r^{2}s^{2},r s^{3},s^{4})$ $6$-progression. The rest of the proof is the same as Proposition \ref{mainres}.
\end{proof}

Bounding the upper and lower densities of other generalized arithmetic progressions is a possible direction for future research. Also the main open problems from \cite{dga} are still unsolved, including (1) the existence of a permutation of the positive integers avoiding arithmetic progressions of length $4$ and (2) a partition of the positive integers into two subsets, each which can be permuted to avoid arithmetic progressions of length $3$. These problems could also be investigated for generalized arithmetic progressions.


\begin{thebibliography}{}

\bibitem{dga} J.A. Davis, R.C. Entringer, R.L. Graham, G.J. Simmons, On permutations containing no long arithmetic progressions. Acta Arithmetica, 34 (1977), pp. 81-90.
\bibitem{ega} P. Erdos, R.L. Graham, Old and new problems and results in combinatorial number theory, in: L’Enseignment Mathematique, Monograph No. 28, Geneva, 1980.
\bibitem{ab} Timothy D. LeSaulnier, Sujith Vijay, On permutations avoiding arithmetic progressions. Discrete Mathematics 311(2-3): 205-207 (2011)

\end{thebibliography}
\end{document}